\newcommand{\matr}[1]{\mathbf{#1}}
\newcommand{\Qmat}{\matr{Q}}
\newcommand{\QDmat}{\matr{Q}_\Delta}
\newcommand{\vect}[1]{\boldsymbol{#1}}
\newcommand{\dt}{\Delta t}
\begin{document}
\title{Parallelizing spectral deferred corrections across the method}
\author{Robert Speck}
\institute{R. Speck, r.speck@fz-juelich.de \at J\"ulich Supercomputing Centre, Forschungszentrum J\"ulich GmbH}
\date{Received: date / Revised version: date}
%
\maketitle
\begin{abstract}

%

%

In this paper we present two strategies to enable ``parallelization across the method'' for spectral deferred corrections (SDC).
Using standard low-order time-stepping methods in an iterative fashion, SDC can be seen as preconditioned Picard iteration for the collocation problem. 
Typically, a serial Gau\ss-Seidel-like preconditioner is used, computing updates for each collocation node one by one.
The goal of this paper is to show how this process can be parallelized, so that all collocation nodes are updated simultaneously.
The first strategy aims at finding parallel preconditioners for the Picard iteration and we test three choices using four different test problems.
For the second strategy we diagonalize the quadrature matrix of the collocation problem directly.
In order to integrate non-linear problems we employ simplified and inexact Newton methods.
Here, we estimate the speed of convergence depending on the time-step size and verify our results using a non-linear diffusion problem.

\keywords{Spectral deferred corrections, parallel-in-time integration, preconditioning, simplified Newton}
\end{abstract}

\section{Introduction}

Implicit integration methods based on collocation are an attractive approach to solve initial value problems numerically. 
Depending on the choice of the collocation or quadrature nodes, they feature near-ideal or even ideal (for Gau\ss-Legendre nodes) convergence orders and typically have very advantageous stability properties. 
However, solving the dense and fully coupled collocation problem directly is prohibitively expensive in most cases:
For $M$ collocation nodes and an $N$-dimensional system of ordinary differential equations (ODEs), a system of size $MN\times MN$ has to be solved.
Thus, an iterative strategy is favorable, where instead of the full system only $M$ smaller systems of size $N\times N$ need to be solved for each iteration.

\bigskip

Such an approach is given by the so-called ``spectral deferred correction methods'' (SDC), introduced in~\cite{DuttEtAl2000}.
After casting the initial value problem into its Picard form, a provisional solution to the integral problem is computed using a standard time-stepping method, typically the explicit or the implicit Euler scheme. 
Then, this provisional solution is corrected using a sequence of error integral equations, which are also solved using one of the standard methods. 
This way, a higher-order time-stepping method can be obtained simply by using low-order methods repeatedly.
Xia et al.\ showed in~\cite{ShuEtAl2007} that each iteration or ``sweep'' of SDC can raise the order by one up to the order of the underlying collocation formula.
The iterative structure of SDC has been proven to provide many opportunities for algorithmic and mathematical improvements: convergence can be accelerated by GMRES~\cite{HuangEtAl2006}, IMEX splitting with high orders of accuracy is possible~\cite{Minion2003,RuprechtSpeck2016} and work can be shifted to coarser, less expensive levels to improve the efficiency of SDC~\cite{speck2015multi}.
In the last decade, SDC has been applied e.g.\ to gas dynamics and incompressible or reactive flows~\cite{BouzarthMinion2010,LaytonMinion2004,Minion2004} as well as to fast-wave slow-wave problems~\cite{RuprechtSpeck2016} or particle dynamics~\cite{winkel2015high}.

\bigskip

One of the key features of such an iterative approach for time-stepping, though, is that these approaches can be used to enable efficient parallel-in-time integration.
Using SDC, the ``parallel full approximations scheme in space and time'' (PFASST) by Emmett and Minion~\cite{EmmettMinion2012} allows to integrate multiple time-steps simultaneously by using SDC sweeps on a space-time hierarchy.
This ``parallelization across the steps'' approach~\cite{Burrage1997} targets large-scale parallelization on top of saturated spatial parallelization of partial differential equations (PDEs), where parallelization in the temporal domain acts as a multiplier for standard parallelization techniques in space.
In contrast, ``parallelization across the method'' approaches~\cite{Burrage1997} try to parallelize the integration of each time-step individually.
While this typically results in small-scale parallelization in the time-domain, parallel efficiency and applicability of these methods are often more favorable.
Most notable, the ``revisionist integral deferred correction method'' (RIDC) by Christlieb et al.~\cite{ChristliebEtAl2010} makes use of integral deferred corrections (which are indeed closely related to SDC) in order to compute multiple iterations in a pipelined way.
Also, in~\cite{JacksonEtAl1995,VanderHouwen1990,VanderHouwen1993} parallel Runge-Kutta methods were investigated and we refer to~\cite{Burrage1993} for more examples.

\bigskip

In this paper, we present two approaches to parallelize SDC across the method, allowing to compute the update for all collocation nodes simultaneously.
First, we make use of parallel preconditioners by rewriting SDC as preconditioned Picard iteration, following the ideas of~\cite{HuangEtAl2006,Weiser2014,RuprechtSpeck2016}.
We explore ideas appearing in the context of Runge-Kutta methods~\cite{VanderHouwen1990,VanderHouwen1993} and investigate three different choices, all of which enable parallelization across the nodes.
Second, we diagonalize the quadrature matrix of the collocation problem and show how this idea can be extended to non-linear problems using simplified and inexact Newton methods~\cite{Jay2000}.
We estimate the speed of convergence depending on the time-step size and verify our results using a non-linear diffusion problem.
This second approach is closely related to the diagonalization-based parallelization strategy presented independently in~\cite{Gander2016}, which uses this technique to enable larger-scale parallelization across the steps.
We note that while methods like PFASST target distributed-memory parallelization, both approaches presented here are best implemented using shared-memory parallelization.

\section{Spectral deferred corrections}
For ease of notation we consider a scalar initial value problem
\begin{align*}
  u_t = f(u),\quad u(0) = u_0
\end{align*}
with $u(t), u_0, f(u) \in\mathbb{R}$.
For an interval $[t_0,t_1]$, we rewrite this in Picard formulation as
\begin{align*}
  u(t) = u_0 + \int_{t_0}^t f(u(s))ds,\ t\in[t_0,t_1],
\end{align*}
Introducing $M$ quadrature nodes $\tau_1,...,\tau_M$ with $t_l \le \tau_1 < ... < \tau_M = t_{l+1}$, we can approximate the integrals from $t_l$ to these nodes $\tau_m$ using spectral quadrature like Gau\ss-Radau or Gau\ss-Lobatto quadrature, such that
\begin{align*}
  u_m - \sum_{j=1}^Mq_{m,j}f(u_j) = u_0 
\end{align*} 
where $u_m \approx u(\tau_m)$, $\dt = t_{1}-t_0$ and $q_{m,j}$ represent the quadrature weights for the interval $[t_0,\tau_m]$ such that
\begin{align*}
  \sum_{j=1}^Mq_{m,j}f(u_j)\approx\int_{t_0}^{\tau_m}f(u(s))ds.
\end{align*}
We can now combine these $M$ equations into one system of linear or non-linear equations with
\begin{align}\label{eq:coll_prob}
  \left(\matr{I} - \dt\Qmat\vect{F}\right)(\vect{u}) = \vect{u}_0
\end{align}
where $\vect{u} = (u_1, ..., u_M)^T \approx (u(\tau_1), ..., u(\tau_M))^T\in\mathbb{R}^M$, $\vect{u}_0 = (u_0, ..., u_0)^T\in\mathbb{R}^M$, $\Qmat = (q_{ij})_{i,j}\in\mathbb{R}^{M\times M}$ is the matrix gathering the quadrature weights and the vector function $\vect{F}$ is given by $\vect{F}(\vect{u}) = (f(u_1), ..., f(u_M))^T\in\mathbb{R}^M$.
This system of equations is called the ``collocation problem'' and it is equivalent to a fully implicit Runge-Kutta method, where the matrix $\matr{Q}$ contains the entries of the corresponding Butcher tableau.
We note that for $f(u) \in\mathbb{R}^N$, we need to replace $\Qmat$ by $\Qmat\otimes\matr{I}_N$.

\bigskip

This system of equations is dense and a direct solution is not advisable, in particular if the right-hand side of the ODE is non-linear.
Using SDC, this problem can be solved iteratively and we follow~\cite{HuangEtAl2006,Weiser2014,RuprechtSpeck2016} to present SDC as preconditioned Picard iteration for the collocation problem~\eqref{eq:coll_prob}.
Standard, unmodified Picard iteration is given by
\begin{align*}
  \vect{u}^{k+1} = \vect{u}^{k} + \left(\vect{u}_0 - \left(\matr{I}_{MN} - \dt\matr{Q}\vect{F}\right)\right)\left(\vect{u}^k\right)
\end{align*} 
for $k = 0, ... K$.
For very small $\dt$, this indeed converges to the solution of~\eqref{eq:coll_prob}.
In order to increase range and speed of convergence, we now precondition this iteration.
The standard approach to preconditioning is to define an operator $\matr{P}$ which is easy to invert but also close to the operator of the system.
For SDC, we now choose a simpler quadrature rule for the preconditioner. 
In particular, the resulting matrix $\QDmat$ gathering the weights of this rule is a lower triangular matrix, such that solving the system can be easily done by forward substitution. 
We write
\begin{align}\label{eq:sdc_iteration}
  \left(\matr{I} - \dt\QDmat\vect{F}\right)(\vect{u}^{k+1}) = \vect{u}_0 + \dt(\Qmat-\QDmat)\vect{F}(\vect{u}^{k})
\end{align}
and the operator $\matr{I} - \dt\QDmat\vect{F}$ is then called the SDC preconditioner.
The matrix $\QDmat$ is typically given by the implicit Euler method which corresponds to the right-hand side rule in terms of integration with
\begin{align*}
  \QDmat^{\mathrm{IE}} = \frac{1}{\Delta t}\begin{pmatrix} \Delta \tau_1 & \\ \Delta \tau_1 & \Delta \tau_2 & \\ \vdots & \vdots & \\ \Delta \tau_1 & \Delta \tau_2 & \ldots & \Delta \tau_M \end{pmatrix},
\end{align*}
where $\Delta\tau_m = \tau_{m} - \tau_{m-1}$ for $m=2,...,M$ and $\Delta\tau_1 = \tau_{1} - t_0$ or, using the LU decomposition of $\matr{Q}^T$~\cite{Weiser2014}, by
\begin{align*}
  \QDmat^{\mathrm{LU}} = \matr{U^T}\quad \text{for}\quad \Qmat^T = \matr{L}\matr{U}.
\end{align*}
This choice, sometimes also called the LU trick, is very well suited for stiff problems and has become the de-facto standard choice for $\QDmat$.

\bigskip

Yet, common to these and most other choices is the fact that $\QDmat$ is a lower triangular matrix, so that solving~\eqref{eq:sdc_iteration} can be done only in a serial, Gau\ss-Seidel-like way: first solve for $u^{k+1}_1$ using the initial value $u_0$, then for $u^{k+1}_2$ using $u_1^{k+1}$ and so on.  
In order to introduce parallelism across the quadrature nodes, we investigate two strategies in the following: (A) choose a parallel preconditioner and (B) diagonalize the quadrature matrix.

\section{Parallel preconditioning}

The first idea to parallelize SDC over the quadrature nodes is quite obvious: instead of following a Gau\ss-Seidel-like approach, we try to find suitable matrices $\QDmat$ which only have entries on the the diagonal, i.e.\ which allow to follow a Jacobian-like approach.
To this end, we identify three candidates: 
\begin{enumerate}
  \item take the diagonal of $\Qmat$, i.e. 
  \begin{align}\label{eq:QDmat_Qpar}
    \QDmat^{\mathrm{Qpar}} = \mathrm{diag}(q_{11}, ..., q_{mm}, ..., q_{MM}),
  \end{align}
  \item use Euler steps from $t_0$ to $\tau_m$, i.e. 
  \begin{align}\label{eq:QDmat_IEpar}
    \QDmat^{\mathrm{IEpar}} = \mathrm{diag}(\tau_1-t_0, ..., \tau_m-t_0, ..., \tau_M-t_0),
  \end{align}
  \item minimize the spectral radius of $\matr{I} - \QDmat^{-1}\Qmat$, i.e.
  \begin{align}\label{eq:QDmat_MIN}
    \QDmat^{\mathrm{MIN}} = \mathrm{diag}(\vect{\hat{q}})
  \end{align}
  with
  \begin{align*}
    \vect{\hat{q}} = \mathrm{argmin}_{\vect{q}\in\mathbb{R}^M}\ \rho(\matr{I} - \mathrm{diag}(\vect{q})\Qmat)
  \end{align*}
\end{enumerate}

While the first two approaches are obvious candidates and straightforward to compute, the third one is more involved.
For the linear test problem $u_t = \lambda u$, SDC has an iteration matrix $\matr{K}$ with
\begin{align*}
  \matr{K} = \lambda\dt\QDmat\left(\matr{I} - \lambda\dt\QDmat\right)^{-1}\left(\QDmat^{-1}\Qmat - \matr{I}\right),
\end{align*}
see e.g.~\cite{RuprechtSpeck2016}.
While the first factors all depend on the ``space''-problem parameter $\lambda$ the last factor does not and can therefore be modified independently of the spatial problem at hand.
It also corresponds (up to the sign) to the stiff limit of the iteration matrix, i.e.~for $\lvert\lambda\dt\rvert\rightarrow\infty$ we have $\matr{K}\rightarrow\matr{I} - \QDmat^{-1}\Qmat$, see~\cite{RuprechtSpeck2016}.
We choose to minimize the spectral radius, because the hope is that in this case strong damping of the stiff iteration error components is achieved, see~\cite{VanderHouwen1993} for more details on this matter.

\bigskip

However, to the best of our knowledge there exists no analytic expression for the eigenvalues of $\matr{Q}$ or $\mathrm{diag}(\vect{q})\Qmat$, so that the computation of this minimizer has to be done numerically. 
We use the Nelder-Mead algorithm as implemented by SciPy v.0.18.1~\cite{scipy} in the ``optimize'' package. 
 
\begin{figure}[t]
  \centering
  \begin{subfigure}[b]{\columnwidth}
    \centering
    \includegraphics[width=0.95\columnwidth]{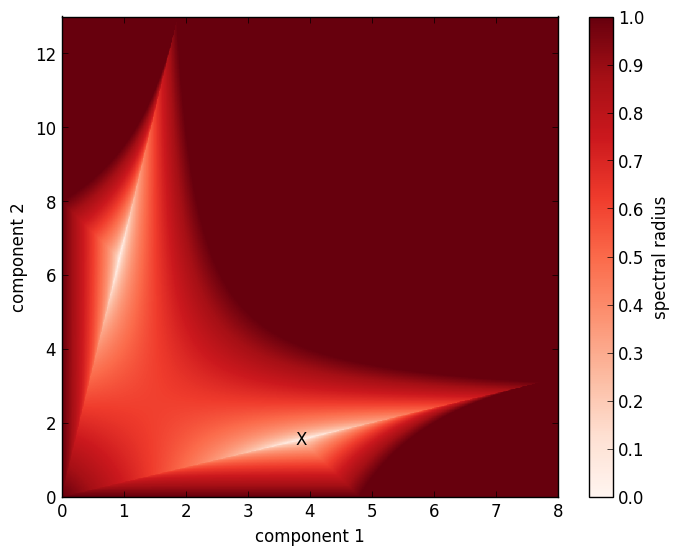}
    \caption{Full region}
    \label{fig:parallelSDC_minimizer_full}
  \end{subfigure}
  \par\medskip
  \begin{subfigure}[b]{\columnwidth}
    \centering
    \includegraphics[width=0.95\columnwidth]{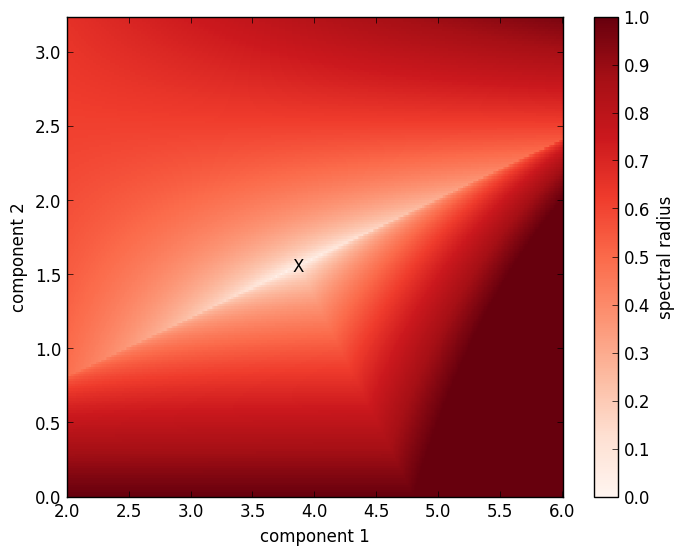}
    \caption{Zoom in}
    \label{fig:parallelSDC_minimizer_zoom}
  \end{subfigure}
  \caption{Spectral radius of $\matr{I}_2-\QDmat^{-1}\Qmat$ for a diagonal matrix $\QDmat\in\mathbb{R}^{2\times 2}$ consisting of different components $1$ (x-axis) and $2$ (y-axis). The X indicates the result of the Nelder-Mead optimization. Note that both axes are scaled differently. Upper: Full domain, lower: zoom into a region of interest.}
  \label{fig:minimzer} 
\end{figure}

To get a first impression of this approach, we consider $M=2$ Gau\ss-Radau nodes and investigate the values of $\rho(\matr{I} - \mathrm{diag}(\vect{q})\Qmat)$ for different $\vect{q} = (q_1,q_2)^T$.
In Figure~\ref{fig:minimzer} we let component $1$ (i.e.~$q_1$) vary between $0$ and $8$ and component $2$ (i.e.~$q_2$) between $0$ and $13$. 
Further experiments not shown here revealed that outside of this region the spectral radius is greater than $1$.
Figure~\ref{fig:parallelSDC_minimizer_full} shows that there are two regions of interest shown in light colors, where a minimum can be expected. 
For Figure~\ref{fig:parallelSDC_minimizer_zoom}, we zoom into the lower region, where the local minimum value computed by the Nelder-Mead optimization with starting value $\vect{q}_0 = (1,1)^T$ is located.
Note that for e.g.~$\vect{q}_0 = (1,2)^T$ the upper local minimum is found, which is about $2.5$ times smaller than the one from the lower region ($2.6\cdot10^{-5}$ vs.~$6.5\cdot10^{-5}$).
Already for $2$ nodes we can see that the regions of small spectral radii as well as the location of the minima are far from trivial.

\bigskip

Now, what is the best choice for $\QDmat$?
``Best'' in our context means that for this choice the corresponding SDC iteration converges about as fast as the standard choices $\QDmat^{\mathrm{IE}}$ and $\QDmat^{\mathrm{LU}}$ for the problem at hand.
Clearly, this is highly problem dependent, but even worse, the same argument which prevented us from finding the minimizer of the spectral radius analytically apply also to finding the best diagonal matrix $\QDmat$, since there is no closed form of the eigenvalues or the norm of any matrix related to $\Qmat$.
Thus, we choose four test problems, two linear and two non-linear, to quantify the impact of the three different $\QDmat$:

\begin{description}
  
  \item[\textbf{Problem A}] Heat equation with $\nu>0$:
  \begin{align*}
    u_t &= \nu\Delta u\quad\text{on } [0,1]\times[0,T],\\
    u(0,t) &= 0,\ u(1,t) = 0,\\
    u(x,0) &= \sin(2\pi x)
  \end{align*}
  \item[\textbf{Problem B}]\label{desc} Advection equation with $c>0$:
  \begin{align*}
    u_t &= c\nabla u\quad\text{on } [0,1]\times[0,T],\\
    u(0,t) &= u(1,t),\\
    u(x,0) &= \sin(2\pi x)
  \end{align*}
  \item[\textbf{Problem C}] Van der Pol oscillator with $\mu>0$:
  \begin{align*}
    u_t &= v,\ v_t = \mu(1-u^2)v - u\quad\text{on } [0,T],\\
    u(0) &= 2,\ v(0) = 0
  \end{align*}
  \item[\textbf{Problem D}] Nonlinear diffusion of Kolmogorov-Pet\-rov\-skii-Piskunov type~\cite{Feng2008481} with $\lambda_0>0$:
  \begin{align}
    \begin{split}
      u_t &= \Delta u + \lambda^2_0 u(1-u^\nu)\quad \text{on } \mathbb{R}\times[0,T],\\
      u(x,0) &= \left(1 + (2^{\nu/2}-1)e^{-(\nu/2)\delta x}\right)^{-\frac{2}{\nu}}
    \end{split}
  \end{align}
  and constants $\delta>0$ and $\nu\in\mathbb{N}$ ($\nu=1$ is used here, $\delta$ can be found in~\cite{Feng2008481}).
\end{description}

For all runs we choose $M=3$ Gau\ss-Radau nodes, $T=\dt=0.1$ and a residual tolerance of $10^{-8}$. 
For Problems A and D we choose $N=63$ degrees of freedom and for B $N=64$.

\bigskip

In Figures~\ref{fig:prec_heat}-\ref{fig:prec_fisher} we show the number of iterations for five different choices of $\QDmat$: the implicit Euler and the LU trick as references as well as the three diagonal matrices defined in~\eqref{eq:QDmat_Qpar},\eqref{eq:QDmat_IEpar} and \eqref{eq:QDmat_MIN}.
For each problem, we vary the parameter given in the description above to change the characteristics or stiffness of the problem.
The first thing to notice is that in almost all cases $\QDmat^{LU}$ is the best choice, in particular if for a problem the convergence of SDC is required to be roughly the same across all parameter values. 
For non-stiff problem, i.e. for small values of the parameters, however, the diagonal matrices work equally well.
Especially $\QDmat^{\mathrm{MIN}}$ is capable of yielding convergence as fast or sometimes even faster than $\QDmat^{\mathrm{LU}}$, if the parameter is small enough.
The other choices, namely $\QDmat^{\mathrm{IEpar}}$ and $\QDmat^{\mathrm{Qpar}}$ perform reasonably well for small parameters, too, but they lead to drastically increased numbers of iterations for larger parameters.
In this regime, the LU trick shows its strength and, not surprisinlgy, this is precisely the way it was designed~\cite{Weiser2014}.
Only for the two non-linear problems and most notably for Problem D we see that $\QDmat^{\mathrm{MIN}}$ is a favorable choice.
We finally note that results look very similar and in parts even slighty better for $M=5$ nodes.

\begin{figure}[p]
  \centering
  \begin{subfigure}[b]{0.9\columnwidth}
    \centering
    \input{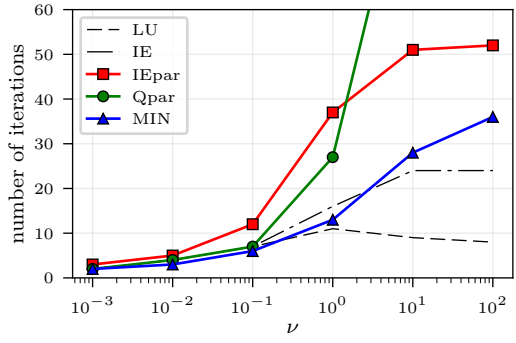}
    \caption{Heat equation}
    \label{fig:prec_heat}
  \end{subfigure}
  \par\medskip
  \begin{subfigure}[b]{\columnwidth}
    \centering
    \input{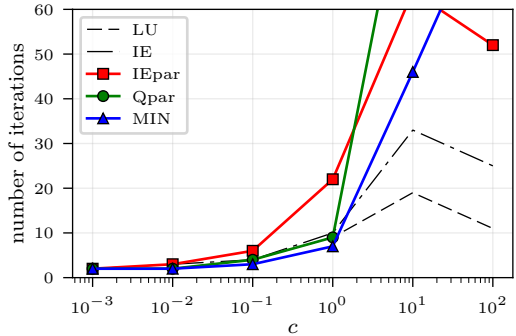}
    \caption{Advection equation}
    \label{fig:prec_adv}
  \end{subfigure}
  \par\medskip
  \begin{subfigure}[b]{\columnwidth}
    \centering
    \input{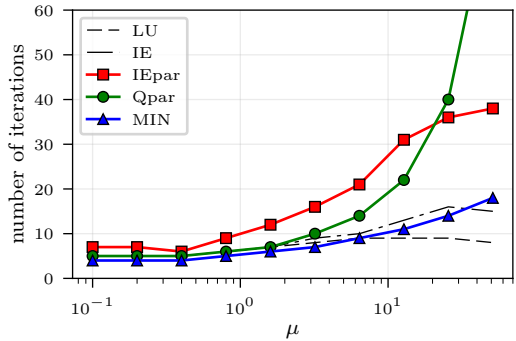}
    \caption{Van der Pol oscillator}
    \label{fig:prec_vdp}
  \end{subfigure}
  \par\medskip 
  \begin{subfigure}[b]{\columnwidth}
    \centering
    \input{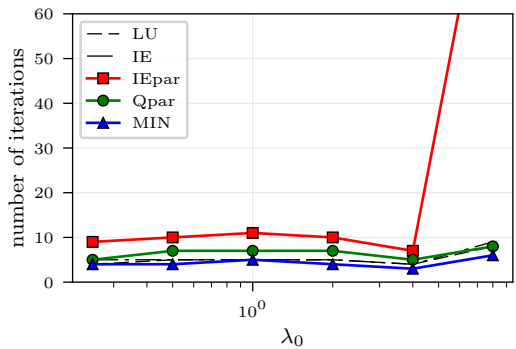}
    \caption{Non-linear diffusion}
    \label{fig:prec_fisher}
  \end{subfigure} 
  \caption{Number of iterations for five different choices of $\QDmat$ for the four test problems A-D and varying parameters. The serial Gau\ss-Seidel-type matrices for the implicit Euler and the LU trick are shown as reference.}
  \label{fig:prec}
\end{figure}

\bigskip

The numerical comparison of the five different candidates for $\QDmat$, two leading to serial and three to parallel SDC iterations, shows no clear result.
The minimi\-zation-based approach $\QDmat^{\mathrm{MIN}}$ seems to be a reliable and sometimes even favorable choice, but only for the scenarios tested here.
Due to the intricate structure of $\Qmat$ there is no adequate mathematical theory to guide the selection of the $\QDmat$ matrix.
Even for a particular choice, estimating convergence speed with reasonable sharp bounds is not straightforward.
However, this section shows that in many cases parallel SDC iterations are indeed possible and can be implemented without any overhead.
Note that this strategy is only reasonable to use for shared-memory parallelization: in each iteration, the system~\eqref{eq:sdc_iteration} is solved in parallel, but in order to compute the right-hand side, each compute unit (core, threads etc.) needs to have data from all $M$ quadrature nodes.
If the collocation problem is large, e.g.\ due to a large amount of degrees-of-freedom $N$ in space, each unit has to receive the full vector $\vect{u}^k$, consisting of $MN$ variables.
If distributed-memory parallelization were used here, communication costs would be prohibitively high.

\section{Diagonalization of $\matr{Q}$}

While choosing a parallel preconditioner is a simple but rather heuristical idea, the second approach we describe here is more intricate.
In order to introduce parallelism across the nodes we now look at the diagonalization of the quadrature matrix $\Qmat$.
To this end, we write
\begin{align*}
  \Qmat = \matr{V}\matr{\Lambda}\matr{V}^{-1}
\end{align*}
where $\matr{\Lambda} = \mathrm{diag}(\lambda_1(\Qmat), ..., \lambda_M(\Qmat))^T$ is a diagonal matrix with eigenvalues $\lambda_i(\Qmat)\in\mathbb{C}$ of $\Qmat$ on the diagonal and $\matr{V}$ contains the eigenvectors of $\matr{Q}$.
For non-symmetric quadrature rules such as Gau\ss-Radau, this diagonalization of $\Qmat$ is possible, since all eigenvalues are distinct.

\bigskip

If $f$ is linear, i.e. if $f(u) = au$, then this leads to a parallel direct solver of the collocation problem~\eqref{eq:coll_prob}. We have
\begin{align}\label{eq:diag_linear}
  \left(\matr{I} - \dt\Qmat\vect{F}\right)(\vect{u}) &= \left(\matr{I} - a\dt\Qmat\right)\vect{u}\\
  &= \matr{V}\left(\matr{I} - a\dt\matr{\Lambda}\right)\matr{V}^{-1}
\end{align}
so that~\eqref{eq:coll_prob} can be solved in three simple steps:
\begin{enumerate}
  \item replace $\vect{u}_0$ by $\tilde{\vect{u}}_0 = \matr{V}^{-1}\vect{u}_0$ (serial)
  \item solve $\left(\matr{I} - a\dt\matr{\Lambda}\right)\tilde{\vect{u}} = \tilde{\vect{u}}_0$ (parallel in $M$)
  \item replace $\tilde{\vect{u}}$ by $\vect{u} = \matr{V}\tilde{\vect{u}}$ (serial)
\end{enumerate}
Since $\matr{\Lambda}$ is a diagonal matrix, step 2 can be done in parallel for all $M$ quadrature nodes at once.
Steps 1 and 3, in contrast, require global communication due to the structure of $\matr{V}$ and $\matr{V}^{-1}$ and as before, for problems with many degrees-of-freedom in space (i.e.~for $N$ large), this results in significant communication costs.
We therefore consider this strategy only suitable for shared-memory parallelization as well, which plays nicely with the rather small number $M$ of quadrature nodes used in typical applications.

\bigskip

Yet, the more severe restriction comes from the linearity of the right-hand side $f$. 
If $f$ is non-linear, we cannot follow~\eqref{eq:diag_linear} to solve the collocation problem in parallel.
This is due to the coupling of the non-linear $\vect{F}(\vect{u})$ with the matrix $\matr{Q}$, which does not allow the $\matr{V}$ to be extracted.
The obvious way to proceed is to linearize the problem using Newton's method.
We define
\begin{align*}
  \vect{G}(\vect{u}) = \vect{u} - \dt\Qmat\vect{F}(\vect{u}) - \vect{u}_0
\end{align*}
so that solving~\eqref{eq:coll_prob} is equivalent to finding a root of $\vect{G}(\vect{u})$.
For Newton iterations, we need the Jacobian $\vect{J}_{\vect{G}}$ of $\vect{G}$, which is given by
\begin{align*}
  \vect{J}_{\vect{G}}(\vect{u}) = \matr{I} - \dt\Qmat\vect{J}_{\vect{F}}(\vect{u})
\end{align*}
with
\begin{align}
  \vect{J}_{\vect{F}}(\vect{u}) = \mathrm{diag}\left(f'(u_1), ..., f'(u_M)\right)\in\mathbb{R}^{M\times N}.
\end{align}
Then, the Newton iteration is given by
\begin{align}
  \begin{split}\label{eq:newton_it}
    \vect{J}_{\vect{G}}(\vect{u}^k)\vect{e}^k &= -\vect{G}(\vect{u}^k),\\
    \vect{u}^{k+1} &= \vect{u}^k + \vect{e}^k.
  \end{split}
\end{align}
The matrix $\vect{J}_{\vect{G}}(\vect{u}^k)$ looks very much like the original operator of the collocation problem~\eqref{eq:coll_prob} for a linear function.
However, the matrix $\vect{J}_{\vect{F}}(\vect{u}^k)$ which appears in this problem still does not decouple from the matrix $\Qmat$, since for each quadrature node a different entry is given.
Again, $\matr{V}$ cannot be extracted and we need another step to obtain parallelism via diagonalization.

\bigskip

We note that for $k=0$ we have
\begin{align*}
  \vect{J}_{\vect{F}}(\vect{u}^0) &= \mathrm{diag}\left(f'(u^0_1), ..., f'(u^0_M)\right)\\
  &= \mathrm{diag}\left(f'(u^0_0), ..., f'(u^0_0)\right) = f'(u_0)\matr{I}_M
\end{align*}
if the iteration is started with $\vect{u}_0$ as initial guess.
This directly leads to a simplified Newton method with
\begin{align}
  \begin{split}\label{eq:simply_newton_it}
    \vect{J}_{\vect{G}}(\vect{u}^0)\vect{e}^k &= -\vect{G}(\vect{u}^k),\\
    \vect{u}^{k+1} &= \vect{u}^k + \vect{e}^k.
  \end{split}
\end{align}
where 
\begin{align}\label{eq:simply_Jg_def}
  \vect{J}_{\vect{G}}(\vect{u}^0) = \matr{I} - f'(u_0)\dt\Qmat
\end{align}
and here we indeed can use diagonalization of $\Qmat$ for parallelization across $M$ nodes.
For each iteration $k$ with a given iterate $\vect{u}^k$, the algorithm consists of these four steps:

\begin{enumerate}
  \item replace $\vect{r}^k = -\vect{G}(\vect{u}^k)$ by $\tilde{\vect{r}}^k = -\matr{V}^{-1}\vect{G}(\vect{u}^k)$ (serial)
  \item solve $\left(\matr{I} - f'(u_0)\dt\matr{\Lambda}\right)\tilde{\vect{e}}^k = \tilde{\vect{r}}^k$ (parallel in $M$)
  \item replace $\tilde{\vect{e}}^k$ by $\vect{e}^k = \matr{V}\tilde{\vect{e}}^k$ (serial)
  \item set $\vect{u}^{k+1} = \vect{u}^k + \vect{e}^k$ (parallel in $M$)
\end{enumerate}

We note that using simplified Newton methods for fully-implicit Runge-Kutta schemes like~\eqref{eq:coll_prob} is a standard way of solving these systems, see e.g. Section IV.8 in~\cite{Hairer_ODEsII}.

\bigskip

The price for using diagonalization of $\Qmat$ for parallelization is therefore the re-introduction of an iterative process as well as the need for the Jacobian of the right-hand side.
The latter, at least, has to be computed only once per time-step.
While for linear problems we obtain a parallel direct solver, a simplified Newton approach is required to obtain the same level of parallelism for non-linear problems.
The question now is, how much the approximation of the Jacobian $\vect{J}_{\vect{G}}(\vect{u}^k)$ by $\vect{J}_{\vect{G}}(\vect{u}^0)$ affects the convergence of the method and how this compares to standard SDC iterations.
In contrast to the previous section, we are now able to investigate this not only numerically but also on an analytic level.
It is well known that for suitable right-hand sides and initial guesses the standard, unmodified Newton method converges quadratically while the simplified Newton method only shows linear convergence, see e.g.~\cite{Kelley95,Jackson_NewtonRK}.
Yet, we are also interested in the constants and their dependence on the time-step size $\dt$.
More precisely, we can show the following result.

\begin{theorem}
  Let $f'$ be Lipschitz continuous with constant $\gamma_f$ and $\vect{u}_0, \vect{u}^k\in B(\dt) = \{\vect{u}\in\mathbb{R}^M: \lVert\vect{u} - \vect{u}^*\rVert\le c_1\dt\}$ for the exact solution $\vect{u}^*$ of the collocation problem~\eqref{eq:coll_prob} and the current iterate $\vect{u}^k$. 
  Furthermore, assume that $\vect{J}_{\vect{G}}(\vect{u}^*)$ is non-singular.
  Then the simplified Newton iteration~\eqref{eq:simply_newton_it} converges with
  \begin{align*}
    \lVert \vect{e}^{k+1}\rVert_\infty \le c\dt^2\lVert \vect{e}^k\rVert_\infty
  \end{align*}
  if $\dt$ is small enough.
\end{theorem}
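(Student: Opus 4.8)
The plan is to read $\vect{e}^k$ as the iteration error $\vect{e}^k = \vect{u}^k - \vect{u}^*$, where $\vect{u}^*$ solves the collocation problem~\eqref{eq:coll_prob} so that $\vect{G}(\vect{u}^*) = \vect{0}$, and to extract the advertised rate $c\dt^2$ directly from the error recursion of the frozen-Jacobian iteration~\eqref{eq:simply_newton_it}. Abbreviating $\matr{J}_0 = \vect{J}_{\vect{G}}(\vect{u}^0)$ and noting that $\vect{u}^* = \vect{u}^* - \matr{J}_0^{-1}\vect{G}(\vect{u}^*)$ is a fixed point, subtraction from~\eqref{eq:simply_newton_it} yields
\begin{align*}
  \vect{e}^{k+1} = \matr{J}_0^{-1}\Bigl[\matr{J}_0\vect{e}^k - \bigl(\vect{G}(\vect{u}^k) - \vect{G}(\vect{u}^*)\bigr)\Bigr].
\end{align*}
First I would replace the difference $\vect{G}(\vect{u}^k) - \vect{G}(\vect{u}^*)$ by its integral mean-value form $\int_0^1 \vect{J}_{\vect{G}}(\vect{u}^* + t\vect{e}^k)\,dt\,\vect{e}^k$, so that
\begin{align*}
  \vect{e}^{k+1} = \matr{J}_0^{-1}\left(\int_0^1 \bigl[\vect{J}_{\vect{G}}(\vect{u}^0) - \vect{J}_{\vect{G}}(\vect{u}^* + t\vect{e}^k)\bigr]\,dt\right)\vect{e}^k.
\end{align*}

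The heart of the argument is that this integrand is $O(\dt^2)$, and this is where both powers of $\dt$ enter. Using $\vect{J}_{\vect{G}}(\vect{u}) = \matr{I} - \dt\Qmat\vect{J}_{\vect{F}}(\vect{u})$ together with the diagonal form of $\vect{J}_{\vect{F}}$, the identity parts cancel and one obtains, for any $\vect{w}$,
\begin{align*}
  \vect{J}_{\vect{G}}(\vect{u}^0) - \vect{J}_{\vect{G}}(\vect{w}) = -\dt\,\Qmat\,\mathrm{diag}\bigl(f'(u_i^0) - f'(w_i)\bigr),
\end{align*}
which already carries one explicit factor $\dt$. The second factor comes from Lipschitz continuity: $\lvert f'(u_i^0) - f'(w_i)\rvert \le \gamma_f\lvert u_i^0 - w_i\rvert$, and since $\vect{u}^0 = \vect{u}_0 \in B(\dt)$ and $\vect{u}^k \in B(\dt)$, the argument $\vect{w} = \vect{u}^* + t\vect{e}^k$ with $t\in[0,1]$ satisfies
\begin{align*}
  \lVert \vect{u}^0 - \vect{w}\rVert_\infty \le \lVert \vect{u}^0 - \vect{u}^*\rVert_\infty + \lVert\vect{e}^k\rVert_\infty \le 2c_1\dt.
\end{align*}
Hence the diagonal matrix above has $\infty$-norm at most $2c_1\gamma_f\dt$, and the whole integrand is bounded by $2c_1\gamma_f\lVert\Qmat\rVert_\infty\,\dt^2$.

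It remains to bound $\lVert\matr{J}_0^{-1}\rVert_\infty$ uniformly in $k$. Since $\matr{J}_0 = \matr{I} - f'(u_0)\dt\Qmat$ is a perturbation of the identity of size $O(\dt)$ by~\eqref{eq:simply_Jg_def}, a Neumann series gives $\lVert\matr{J}_0^{-1}\rVert_\infty \le (1 - \lvert f'(u_0)\rvert\dt\lVert\Qmat\rVert_\infty)^{-1} \le 2$ once $\dt$ is small enough; the hypothesis that $\vect{J}_{\vect{G}}(\vect{u}^*)$ is non-singular supplies the same uniform bound if one prefers to argue in a neighbourhood of $\vect{u}^*$. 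Collecting the estimates gives $\lVert\vect{e}^{k+1}\rVert_\infty \le c\dt^2\lVert\vect{e}^k\rVert_\infty$ with $c = 4c_1\gamma_f\lVert\Qmat\rVert_\infty$, which for small $\dt$ also keeps $\vect{e}^{k+1}$ inside $B(\dt)$ so that the iteration may be continued.

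The step I expect to be the main obstacle is making the two powers of $\dt$ appear cleanly and simultaneously: one must resist bounding the Jacobian mismatch $\vect{J}_{\vect{G}}(\vect{u}^0) - \vect{J}_{\vect{G}}(\vect{w})$ by a generic $O(1)$ constant, which would only reproduce the textbook linear convergence of the simplified Newton method, and instead exploit that the ball $B(\dt)$ has radius proportional to $\dt$. That radius is exactly what forces the Lipschitz factor $\lVert\vect{u}^0 - \vect{w}\rVert_\infty$ to be $O(\dt)$ and, combined with the explicit $\dt\Qmat$ prefactor, upgrades the contraction constant from $O(1)$ to $O(\dt^2)$. A secondary subtlety is verifying that this radius hypothesis is self-consistent, i.e.\ that the iterates generated by~\eqref{eq:simply_newton_it} indeed remain inside $B(\dt)$ so that the bound on $\lVert\vect{u}^0-\vect{w}\rVert_\infty$ holds at every step.
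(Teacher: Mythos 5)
Your proof is correct, but it takes a genuinely different route from the paper's. The paper does not build the error recursion itself: it invokes the inexact-Newton machinery of \cite{Kelley95}, quoting the final estimate from the proof of Theorem 5.4.1 there (recorded as \eqref{eq:est_proof_kelley}), and feeds in the Jacobian-mismatch bound $\lVert\vect{J}_{\vect{G}}(\vect{u}^0)-\vect{J}_{\vect{G}}(\vect{u}^k)\rVert_\infty\le c_1\dt^2$ of \eqref{eq:est_jg0-jgk}, a zero error in $\vect{G}$, and the conversion $\lVert\vect{e}^k\rVert_\infty^2\le\dt\lVert\vect{e}^k\rVert_\infty$ to tame the quadratic Taylor term; the non-singularity of $\vect{J}_{\vect{G}}(\vect{u}^*)$ enters through the constant $P$. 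You instead derive the recursion $\vect{e}^{k+1}=\matr{J}_0^{-1}\bigl(\int_0^1[\vect{J}_{\vect{G}}(\vect{u}^0)-\vect{J}_{\vect{G}}(\vect{u}^*+t\vect{e}^k)]\,dt\bigr)\vect{e}^k$ directly and exploit the structure $\vect{J}_{\vect{G}}=\matr{I}-\dt\Qmat\vect{J}_{\vect{F}}$ so that the identities cancel, merging the paper's two terms (Taylor remainder and frozen-Jacobian mismatch) into a single integrand of size $2c_1\gamma_f\lVert\Qmat\rVert_\infty\dt^2$; both arguments draw one power of $\dt$ from the $\dt\Qmat$ prefactor and the other from the $O(\dt)$ radius of $B(\dt)$. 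Your route buys self-containedness, an explicit constant $c=4c_1\gamma_f\lVert\Qmat\rVert_\infty$, a Neumann-series bound on $\matr{J}_0^{-1}$ that renders the non-singularity hypothesis on $\vect{J}_{\vect{G}}(\vect{u}^*)$ superfluous for small $\dt$, and an explicit verification that the iterates stay in $B(\dt)$, which the paper leaves implicit; the paper's route buys brevity and immediate reuse, since the subsequent inexact variant with $\QDmat$ follows by changing only $P$, whereas your cancellation would need an extra triangle-inequality split there. You also resolved the paper's notational clash correctly: in \eqref{eq:simply_newton_it} the symbol $\vect{e}^k$ is the Newton update, while in the theorem it is the error $\vect{u}^k-\vect{u}^*$, as in Kelley's notation.
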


\begin{proof}
  It is tempting to simply follow Theorem 5.4.2 in~\cite{Kelley95} for this proof: this states that the ``chord'' or simplified Newton method converges linearly to the exact solution $\vect{u}^*$, if the standard assumptions are satisfied (see 4.3 in~\cite{Kelley95}), namely if $\vect{u}^*$ is a solution of $\vect{G}(\vect{u})= 0$, $\vect{J}_{\vect{G}}$ is Lipschitz continuous and $\vect{J}_{\vect{G}}(\vect{u}^*)$ is non-singular.
  We first note that these standard assumptions are satisfied by the conditions we require here.
  In particular, $\vect{J}_{\vect{G}}$ is Lipschitz continuous with constant $\dt\gamma_f\lVert\Qmat\rVert_\infty$, because
  \begin{align}
    \begin{split}\label{eq:lipschitz_G}
      \lVert \vect{J}_{\vect{G}}(\vect{u}) - \vect{J}_{\vect{G}}(\vect{v})\rVert_\infty &\le \dt\lVert\Qmat\rVert_\infty \lVert\vect{J}_{\vect{F}}(\vect{u}) - \vect{J}_{\vect{F}}(\vect{v})\rVert_\infty\\
      &\le \dt\gamma_f\lVert\Qmat\rVert_\infty\lVert\vect{u}-\vect{v}\rVert_\infty.
    \end{split}
  \end{align}
  For the last inequality we note that
  \begin{align*}
    &\ \lVert\vect{J}_{\vect{F}}(\vect{u}) - \vect{J}_{\vect{F}}(\vect{v})\rVert_\infty\\
    =&\ \max_{m=1,...,M}\left\lvert\frac{\partial f}{\partial u_m}(u_m) - \frac{\partial f}{\partial u_m}(v_m)\right\rvert\\
    \le&\ \gamma_f \max_{m=1,...,M}\left\lvert u_m - v_m\right\rvert
    = \gamma_f \lVert\vect{u} - \vect{v}\rVert_\infty.
  \end{align*}
  However, using the estimate given in this theorem with the constants we have here would provide an estimate which is only linear in $\dt$ and therefore too pessimistic.
  To overcome this, we look at the more technical Theorem 5.4.1 in~\cite{Kelley95}, stating that for inaccurately computed $\vect{G}$ and $\vect{J}_{\vect{G}}$ the iteration error $\vect{e}^{k+1}$ can be estimated by a linear combination of the previous error $\vect{e}^k$, the error in the Jacobian $\vect{J}_{\vect{G}}$ as well as the error in the function $\vect{G}$.
  Here, the error in the Jacobian is simply the difference between $\vect{J}_{\vect{G}}(\vect{u}^k)$ and $\vect{J}_{\vect{G}}(\vect{u}^0)$ and we have with~\eqref{eq:lipschitz_G}
  \begin{align}
    \begin{split}\label{eq:est_jg0-jgk}
    \left\lVert\vect{J}_{\vect{G}}(\vect{u}^0) - \vect{J}_{\vect{G}}(\vect{u}^k)\right\rVert_\infty &\le \dt\gamma_f\lVert\Qmat\rVert_\infty\lVert\vect{u}_0-\vect{u}^k\rVert_\infty\\
    &\le c_1\dt^2,  
    \end{split}
  \end{align}
  since $\vect{u}^k$ and $\vect{u}^0$ are both in $B(\dt)$.
  The function $\vect{G}$ is evaluated exactly, so that the error in $\vect{G}$ is just zero.
  Then, looking at the proof of Theorem 5.4.1, the last estimate reads in our notation
  \begin{align}\label{eq:est_proof_kelley}
    \lVert\vect{e}^{k+1}\rVert_\infty \le \gamma_f\dt\left\lVert\vect{e}^k\right\rVert^2_\infty + 16P\left\lVert\vect{e}^k\right\rVert_\infty + 0.
  \end{align}
  for 
  \begin{align*}
    P = \left\lVert\vect{J}_{\vect{G}}(\vect{u}^*)^{-1}\right\rVert_\infty^2&\left\lVert\vect{J}_{\vect{G}}(\vect{u}^*)\right\rVert_\infty\cdot\\
    &\left\lVert\vect{J}_{\vect{G}}(\vect{u}^0) - \vect{J}_{\vect{G}}(\vect{u}^k)\right\rVert_\infty.
  \end{align*}
  For $\dt$ small enough and by assumption, we can bound both $\left\lVert\vect{J}_{\vect{G}}(\vect{u}^*)^{-1}\right\rVert_\infty^2$ and $\left\lVert\vect{J}_{\vect{G}}(\vect{u}^*)\right\rVert_\infty$ by some constant $c_2$.
  Also, we note that 
  \begin{align*}
    \left\lVert\vect{e}^k\right\rVert^2_\infty \le \dt\left\lVert\vect{e}^k\right\rVert_\infty.
  \end{align*}
  Then, putting all the results together Inequality~\eqref{eq:est_proof_kelley} yields
  \begin{align*}
    \lVert \vect{e}^{k+1}\rVert_\infty &\le \gamma_f\dt^2\left\lVert\vect{e}^k\right\rVert_\infty + 16c_1c_2\dt^2\left\lVert\vect{e}^k\right\rVert_\infty\\
    &= c\dt^2\left\lVert\vect{e}^k\right\rVert_\infty
  \end{align*}
  which concludes the proof.\hfill\* \qed

\end{proof}

This shows that the simplified Newton iteration converges linearly with a contraction factor of the order of $\mathcal{O}(\dt^2)$. 

\bigskip

An objection one might raise is that the eigenvalues of $\Qmat$ and thus the entries of $\matr{\Lambda}$ are complex, making implementations slightly more cumbersome.
To avoid this, we can ``borrow'' the preconditioning idea from SDC, i.e.~instead of using the simplified Newton method with Eq.~\eqref{eq:simply_Jg_def}, we use
\begin{align*}
  \vect{J}^\Delta_{\vect{G}}(\vect{u}^0) = \matr{I} - f'(u_0)\dt\QDmat
\end{align*}
so that the simplified Newton method becomes an inexact simplified Newton method:
\begin{align}
  \begin{split}\label{eq:isimply_newton_it}
    \vect{J}^\Delta_{\vect{G}}(\vect{u}^0)\vect{e}^k &= -\vect{G}(\vect{u}^k),\\
    \vect{u}^{k+1} &= \vect{u}^k + \vect{e}^k.
  \end{split}
\end{align}

\begin{theorem}
  Let $f'$ be Lipschitz continuous with constant $\gamma_f$ and $\vect{u}_0, \vect{u}^k\in B(\dt) = \{\vect{u}\in\mathbb{R}^M: \lVert\vect{u} - \vect{u}^*\rVert\le c_1\dt\}$ for the exact solution $\vect{u}^*$ of the collocation problem~\eqref{eq:coll_prob}. 
  Furthermore, assume that $\vect{J}_{\vect{G}}(\vect{u}^*)$ and $\vect{J}^\Delta_{\vect{G}}(\vect{u}^0)$ are non-singular.
  Then the inexact simplified Newton iteration~\eqref{eq:isimply_newton_it} converges with
  \begin{align*}
    \lVert \vect{e}^{k+1}\rVert_\infty \le c\dt\lVert \vect{e}^k\rVert_\infty
  \end{align*}
  if $\dt$ is small enough.
\end{theorem}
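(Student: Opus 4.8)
The plan is to follow the proof of the previous theorem almost verbatim, changing only the one estimate that controls the order of the contraction. As before, I would invoke Theorem 5.4.1 of~\cite{Kelley95}, which bounds the new error $\vect{e}^{k+1}$ of an inexact chord-type method by a linear combination of $\lVert\vect{e}^k\rVert^2_\infty$, the error in the Jacobian actually used in the solve, and the error committed in evaluating $\vect{G}$. Since $\vect{G}$ is still evaluated exactly, the last contribution is again zero, and the standard assumptions (a root $\vect{u}^*$, Lipschitz $\vect{J}_{\vect{G}}$, non-singular $\vect{J}_{\vect{G}}(\vect{u}^*)$) hold just as in the exact case; the additional hypothesis that $\vect{J}^\Delta_{\vect{G}}(\vect{u}^0)$ is non-singular is exactly what makes each step~\eqref{eq:isimply_newton_it} well-defined.

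The only step that genuinely changes is the estimate of the Jacobian error, which is now the deviation of the fixed solve-matrix $\vect{J}^\Delta_{\vect{G}}(\vect{u}^0)$ from the true current Jacobian $\vect{J}_{\vect{G}}(\vect{u}^k)$. Writing $\vect{J}^\Delta_{\vect{G}}(\vect{u}^0) = \matr{I} - \dt\QDmat\vect{J}_{\vect{F}}(\vect{u}^0)$, which is legitimate because $\vect{J}_{\vect{F}}(\vect{u}^0) = f'(u_0)\matr{I}_M$, and $\vect{J}_{\vect{G}}(\vect{u}^k) = \matr{I} - \dt\Qmat\vect{J}_{\vect{F}}(\vect{u}^k)$, I would add and subtract $\dt\Qmat\vect{J}_{\vect{F}}(\vect{u}^0)$ to split
\begin{align*}
  \vect{J}^\Delta_{\vect{G}}(\vect{u}^0) - \vect{J}_{\vect{G}}(\vect{u}^k) = \dt\Qmat\bigl(\vect{J}_{\vect{F}}(\vect{u}^k) - \vect{J}_{\vect{F}}(\vect{u}^0)\bigr) + \dt(\Qmat - \QDmat)\vect{J}_{\vect{F}}(\vect{u}^0).
\end{align*}
The first summand is bounded by $\dt\gamma_f\lVert\Qmat\rVert_\infty\lVert\vect{u}^k - \vect{u}^0\rVert_\infty \le c\dt^2$, exactly as in~\eqref{eq:est_jg0-jgk}, since $\vect{u}^k,\vect{u}^0\in B(\dt)$. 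The second summand, however, equals $\dt\,f'(u_0)(\Qmat - \QDmat)$ and is therefore only $\mathcal{O}(\dt)$: because $\QDmat$ is a genuinely different quadrature matrix, the difference $\Qmat - \QDmat$ is an $\mathcal{O}(1)$ constant that does not vanish. This single surviving term is precisely what degrades the bound from $\mathcal{O}(\dt^2)$ to $\mathcal{O}(\dt)$.

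With this in hand the rest is routine bookkeeping: the quantity $P$ appearing in the proof of the previous theorem now carries $\lVert\vect{J}^\Delta_{\vect{G}}(\vect{u}^0) - \vect{J}_{\vect{G}}(\vect{u}^k)\rVert_\infty = \mathcal{O}(\dt)$ as its last factor, while $\lVert\vect{J}_{\vect{G}}(\vect{u}^*)^{-1}\rVert_\infty^2$ and $\lVert\vect{J}_{\vect{G}}(\vect{u}^*)\rVert_\infty$ stay bounded by a constant for $\dt$ small. Substituting into the analogue of~\eqref{eq:est_proof_kelley} and once more using $\lVert\vect{e}^k\rVert^2_\infty \le \dt\lVert\vect{e}^k\rVert_\infty$ absorbs the quadratic term into the same order, so that $\lVert\vect{e}^{k+1}\rVert_\infty \le c\dt\lVert\vect{e}^k\rVert_\infty$. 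I expect the main obstacle to be conceptual rather than computational: one must keep careful track of which matrix plays the role of Kelley's ``approximate Jacobian'' so that the error term is the full $\vect{J}^\Delta_{\vect{G}}(\vect{u}^0) - \vect{J}_{\vect{G}}(\vect{u}^k)$ and not merely the change in $\vect{J}_{\vect{F}}$, and to verify that the non-singularity of $\vect{J}^\Delta_{\vect{G}}(\vect{u}^0)$ — automatic for small $\dt$ as it is an $\mathcal{O}(\dt)$ perturbation of $\matr{I}$ — indeed keeps $P$ finite.
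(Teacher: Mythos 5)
Your proposal is correct and follows essentially the same route as the paper: both invoke Kelley's Theorem 5.4.1 with the Jacobian error $\lVert\vect{J}^\Delta_{\vect{G}}(\vect{u}^0) - \vect{J}_{\vect{G}}(\vect{u}^k)\rVert_\infty$, and your add-and-subtract of $\dt\Qmat\vect{J}_{\vect{F}}(\vect{u}^0)$ produces exactly the same two terms as the paper's triangle-inequality split through $\vect{J}_{\vect{G}}(\vect{u}^0)$, namely the Lipschitz term of order $\dt^2$ and the term $\dt\,f'(u_0)(\Qmat-\QDmat)$ of order $\dt$. The subsequent bookkeeping, including absorbing $\gamma_f\dt\lVert\vect{e}^k\rVert_\infty^2$ via $\lVert\vect{e}^k\rVert_\infty^2\le\dt\lVert\vect{e}^k\rVert_\infty$, matches the paper's conclusion.
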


\begin{proof}
  We use again the final estimate of the proof of Theorem 5.4.1 in~\cite{Kelley95}, which for
  \begin{align*}
    \tilde{P} = \left\lVert\vect{J}_{\vect{G}}(\vect{u}^*)^{-1}\right\rVert_\infty^2\left\lVert\vect{J}_{\vect{G}}(\vect{u}^*)\right\rVert_\infty\left\lVert\vect{J}_{\vect{G}}^\Delta(\vect{u}^0) - \vect{J}_{\vect{G}}(\vect{u}^k)\right\rVert_\infty
  \end{align*}
  now reads
  \begin{align*}
    \lVert \vect{e}^{k+1}\rVert_\infty \le \gamma_f\dt\left\lVert\vect{e}^k\right\rVert^2_\infty + 16\tilde{P}\left\lVert\vect{e}^k\right\rVert_\infty + 0,
  \end{align*}  
  i.e.~we simply replaced $\vect{J}_{\vect{G}}(\vect{u}_0)$ by $\vect{J}_{\vect{G}}^\Delta(\vect{u}^0)$ in~\eqref{eq:est_proof_kelley}.
  Then, we have 
  \begin{align*}
    \left\lVert\vect{J}_{\vect{G}}^\Delta(\vect{u}^0) - \vect{J}_{\vect{G}}(\vect{u}^k)\right\rVert_\infty \le \ &\left\lVert\vect{J}_{\vect{G}}^\Delta(\vect{u}^0) - \vect{J}_{\vect{G}}(\vect{u}^0)\right\rVert_\infty \\
    &+ \left\lVert\vect{J}_{\vect{G}}(\vect{u}^0) - \vect{J}_{\vect{G}}(\vect{u}^k)\right\rVert_\infty
  \end{align*}
  and we note that the second term is the same we had in~\eqref{eq:est_jg0-jgk}.
  For the first term it is
  \begin{align*}
    \left\lVert\vect{J}_{\vect{G}}^\Delta(\vect{u}^0) - \vect{J}_{\vect{G}}(\vect{u}^0)\right\rVert_\infty \le \dt\lvert f'(u_0)\rvert \left\lVert\Qmat-\QDmat\right\rVert_\infty
  \end{align*}
  so that
  \begin{align*}
    \left\lVert\vect{J}_{\vect{G}}^\Delta(\vect{u}^0) - \vect{J}_{\vect{G}}(\vect{u}^k)\right\rVert_\infty \le\ &\dt\lvert f'(u_0)\rvert \left\lVert\Qmat-\QDmat\right\rVert_\infty\\
    &+ c_1\dt^2,
  \end{align*}
  see~\eqref{eq:est_jg0-jgk}.
  Therefore, we obtain
  \begin{align*}
    \lVert \vect{e}^{k+1}\rVert_\infty \le &\gamma_f\dt^2\left\lVert\vect{e}^k\right\rVert_\infty + 16c_1c_2\dt^2\left\lVert\vect{e}^k\right\rVert_\infty\\
    &+ 16c_2c_3\dt\left\lVert\Qmat-\QDmat\right\rVert_\infty\left\lVert\vect{e}^k\right\rVert_\infty
  \end{align*}
  so that in summary
  \begin{align*}
    \lVert \vect{e}^{k+1}\rVert_\infty \le c\dt\left\lVert\vect{e}^k\right\rVert_\infty
  \end{align*}
  which concludes the proof.\hfill\* \qed
\end{proof}

Aside from constants, this is the same rate as the classical SDC convergence rate~\cite{Tang2013,RuprechtSpeck2016}.
However, we can see in the last estimate of this proof that with the introduction of $\QDmat$ the order of convergence in $\dt$ becomes linear only because of the factor $\left\lVert\Qmat-\QDmat\right\rVert_\infty$ which turns out to be small in many cases.

\bigskip

We test both theorems using the the nonlinear diffusion equation, see also Problem D in the last section:
\begin{align}\label{eq:fisher}
  \begin{split}
    u_t &= u_{xx} + \lambda^2_0 u(1-u^\nu)\quad \text{on } \mathbb{R}\times[0,T],\\
    u(x,0) &= \left(1 + (2^{\nu/2}-1)e^{-(\nu/2)\delta x}\right)^{-\frac{2}{\nu}}
  \end{split}
\end{align}
for constants $\delta>0$, $\lambda_0>0$ and $\nu\in\mathbb{N}$.
For particular choices of $\delta$ and $\lambda_0$ this problem has an exact solution which we can use to evaluate the error. 
Note that instead of having boundaries at $\pm\infty$ we use the exact solution at the boundaries on a fixed interval $[a,b] = [-5,5]$.
We use $M=5$ Gau\ss-Radau nodes with the LU trick of~\cite{Weiser2014} as preconditioner $\QDmat$, $T=0.1$ with $2,...,16$ time-steps, finite differences with $N=2047$ degrees-of-freedom in space and $\nu=1$, $\lambda_0 = 5$ and $\delta$ given by the relation in~\cite{Feng2008481}.
For SDC, the implicit systems at each node are solved using a standard (spatial) Newton method with tolerance $10^{-12}$ while the linear systems for the Newton-like approaches are solved directly.
In order to compute the rate of error reduction, we compute the ratio between the error on all quadrature nodes before and after iteration $2$ at the last time-step. 
For SDC, the simplified Newton method as well as the inexact Newton method, these ratios are shown in Figure~\ref{fig:fisher} for different time-step sizes $\dt$.

\begin{figure}[t]
  \centering
  \input{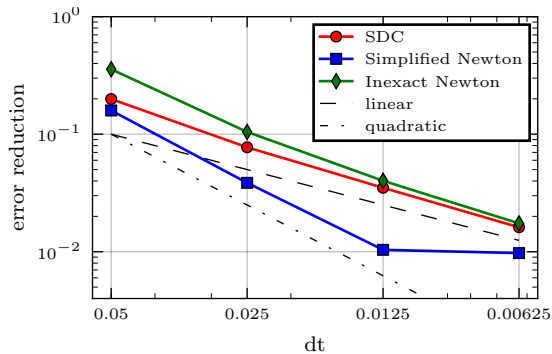}
  \caption{Convergence rates of SDC, simplified Newton and inexact Newton for the nonlinear diffusion problem~\eqref{eq:fisher}. Difference between the error at iteration $2$ and $3$ for the last time-step is shown.}
  \label{fig:fisher}
\end{figure}
 
We can nicely observe linear convergence of SDC as well as the quadratic convergence of the simplified Newton method. 
The inexact Newton method converges slightly faster than linear, but is still far away from quadratic convergence.
It has the largest ratio of all three methods, suggesting that the constants are higher than for the other methods.
We note that in this case we have $\left\lVert\Qmat-\QDmat\right\rVert_\infty \approx 0.265$.
We also observe that the simplified Newton method does not only have better convergence rates but also has the smallest ratios of all methods for all $\dt$.
Thus, if complex arithmetic is not a problem (or circumvented in other ways) and if the Jacobian of the right-hand side is available, this approach seems to be preferable.

\section{Conclusion and outlook}

In this paper we have introduced, discussed and analyzed two different approaches for parallelizing iterations of implicit spectral deferred corrections across the quadrature nodes.
While parallel-in-time algorithms like PFASST allow to compute multiple time-steps simultaneously and target large-scale parallelism in time, the ideas presented here focus on the small-scale parallelization of a single time-step. 
In the sense of~\cite{Burrage1997}, these ideas therefore realize ``parallelization across the method'' for SDC.

\bigskip

The first approach allows simultaneous evaluation of SDC updates on all quadrature nodes by using a diagonal preconditioning matrix $\QDmat$ instead of standard Gau\ss-Seidel-like choices.
With $\QDmat$ being diagonal, all $M$ ``stages'' of SDC, i.e.~updates for the solution at each quadrature node, can be computed using $M$ processes. 
This includes solving an implicit system at the nodes as well as evaluation of the right-hand side of the initial value problem, all of which can now be done in parallel.
We chose three different diagonal matrices and analyzed their impact on the convergence of SDC using four different test problems.
While for non-stiff cases all candidates performed rather well, only the third alternative using a minimization approach was able to work about as good as the standard SDC preconditioners for stiff problems, too.
Yet, so far no conclusive theory exists to estimate the impact of the choice of $\QDmat$ in the convergence of SDC, let alone for the derivation of optimal (serial or parallel) preconditioners.
For this approach, generic and rather obvious choices of $\QDmat$ were considered and it can be expected that better candidates might exists, in particular if the problem at hand is taken into account.

\bigskip

The second approach uses diagonalization of the quadrature matrix $\Qmat$ to achieve parallelism across the nodes.
For linear problems and suitable choices of quad\-rature rules, this yields a direct parallel solver of the collocation problem.
For non-linear problem, though, this is not applicable and linearization via Newton's method is needed. 
The resulting linear systems for each Newton iteration looks similar to a linear collocation problem, but only when the full Jacobian is frozen at the first node the diagonalization technique is applicable.
This simplified Newton method shows quite remarkable convergence properties when compared to standard SDC. 
Yet, complex arithmetic is needed due to the complex eigenvalues of $\Qmat$.
If this is an issue, the simplified Newton method can be further extended to an inexact simplified Newton method by diagonalizing $\QDmat$ instead of $\Qmat$.
For these two methods, simplified and inexact simplified Newton, we were able to prove linear convergence and show that the constants depend quadratically in the first and linearly in the second case on $\dt$.
This makes the convergence of the inexact method about as fast as standard SDC, which we could also observe numerically using a non-linear test problem.

\bigskip

Both approaches are rather easy to implement and the code used for generating the numerical results of this paper can be found online within the \texttt{pySDC} framework~\cite{robert_speck_2017_376982}.
We firmly believe that there are more ways to achieve parallelism within SDC, either across the nodes or even across the iterations.
One possibility is to apply methods for parallelization across the step like Parareal~\cite{LionsEtAl2001} for the preconditioner itself.
When choosing the implicit Euler for $\QDmat$, each iteration of SDC is just a sequence of implicit Euler steps with a modified right-hand side.
Thus, if a method is able to parallelize $M$ implicit Euler steps, we could use it within SDC as parallel preconditioner.
If this method is an iterative method itself, then with SDC being the outer iteration ideas like inexact SDC~\cite{SpeckEtAl2016} could further speed up the algorithm.

\bibliographystyle{spmpsci}
\bibliography{refs}

\end{document}